\newtheorem{thm}{Theorem}[section]
\newtheorem{cor}[thm]{Corollary}
\newtheorem{prop}[thm]{Proposition}
\newtheorem*{conjecture*}{Conjecture}
\newtheorem{conjecture}[thm]{Conjecture}
\theoremstyle{remark}
\newtheorem*{question*}{Question}
\theoremstyle{definition}
\numberwithin{equation}{section}  
\newcommand{\OO}{\mathcal{O}}    
\newcommand{\FF}{\mathbb{F}}      
\newcommand{\ZZ}{\mathbb{Z}}     
\newcommand{\PP}{\mathbb{P}}      
\newcommand{\QQ}{\mathbb{Q}}      
\newcommand{\CC}{\mathbb{C}}      
\newcommand{\pp}{\mathfrak{p}}   
\newcommand{\be}{\begin{equation}}
\newcommand{\ee}{\end{equation}}
\newcommand{\benn}{\begin{equation*}}
\newcommand{\eenn}{\end{equation*}}
\newcommand{\ba}{\begin{aligned}}
\newcommand{\ea}{\end{aligned}}
\newcommand{\bbm}{\begin{bmatrix}}
\newcommand{\ebm}{\end{bmatrix}}
\newcommand{\bpm}{\begin{pmatrix}}
\newcommand{\epm}{\end{pmatrix}}
\newcommand{\bi}{\begin{itemize}}
\newcommand{\ei}{\end{itemize}}
\newcommand{\ord}{\operatorname{ord}}
\newcommand{\an}[1]{\operatorname{an}}  
\newcommand{\alg}[1]{\overline{#1}}   
\title[Newton's Method over Number Fields]{On the Number of Places of Convergence \\
for Newton's Method over Number Fields}
\author[Faber]{Xander Faber}
\address{
Department of Mathematics \\
University of Georgia \\
Athens, GA}
\email{xander@math.uga.edu}
\urladdr{http://www.math.uga.edu/~xander/}
\author[Voloch]{Jos\'e Felipe Voloch}
\address{
Department of Mathematics  \\
University of Texas \\
Austin, TX}
\email{voloch@math.utexas.edu}
\urladdr{http://www.ma.utexas.edu/users/voloch/}
\subjclass[2000]{37P05 (primary);
 11B99 (secondary)}
 \keywords{Arithmetic Dynamics, Newton's Method, Primitive Prime Factors}
\begin{document}

	\begin{abstract}
		Let $f$ be a polynomial of degree at least~2 with coefficients in a number field $K$, let
		$x_0$ be a sufficiently general element of $K$, and let $\alpha$ be a root of $f$. We give precise
		conditions under which Newton iteration, started at the point $x_0$, converges $v$-adically to the root
		$\alpha$ for infinitely many places $v$ of $K$. As a corollary we show that if $f$ is irreducible over $K$ of
		degree at least~3, then Newton iteration converges $v$-adically to any given root of $f$ for infinitely
		many places $v$. We also conjecture that the set of places for which Newton iteration diverges has full
		density and give some heuristic and numerical evidence.
	\end{abstract}

\maketitle

\section{Introduction}

	Let $f$ be a nonconstant polynomial with coefficients in a number field $K$. Newton's method provides a strategy for approximating roots of $f$. Recall that if $\alpha \in \CC$ is a root and $x$ is close to $\alpha$ in the complex topology, then one expects
	\[
		0 = f(\alpha) = f(x+(\alpha-x)) \approx f(x) + f'(x)(\alpha - x) \ \ \Longrightarrow \ \ \alpha \approx x - \frac{f(x)}{f'(x)}.
	\]
So if $x_0$ is a generic complex starting point for the method, the hope is that successive applications of the rational map
	\be
	\label{Eq: Newton Definition}
		N_f(t) = N(t) = t - \frac{ f(t)}{ f'(t)}
	\ee
applied to $x_0$ will give successively better approximations to $\alpha$. For example, this strategy succeeds if $x_0$ is chosen sufficiently close to $\alpha$. This all takes place in the complex topology, and it raises the question: Does Newton's method work in other topologies?

In the non-Archimedean setting, many authors identify Hensel's Lemma with Newton's method. (See, e.g.,  \cite[I.6.4]{Robert_p-adic_Book_2000}.)  However, it is worth noting that the usual hypotheses of Hensel's lemma ensure that the starting point $x_0$ is so close to a root that Newton's method will always succeed. The outcome is less clear if the starting point is arbitrary.

Given $x_0 \in K$, define $x_{n+1} = N(x_n)$ for all $n \geq 0$, and suppose that the Newton approximation sequence $(x_n)$ is not eventually periodic. For a place $v$ of $K$, we want to know if the sequence $(x_n)$ converges $v$-adically to a root of $f$. The main result of \cite{Silverman_Voloch_2009} implies that if $\deg(f) > 1$, then there are infinitely many places $v$ for which $(x_n)$ fails to converge in the completion $K_v$. They also ask if there exist infinitely many places for which it does converge \cite[Rem.~10]{Silverman_Voloch_2009}. We are able to give a complete answer to this question.

For the statement of the main theorem, we set the following notation and conventions. For each place $v$ of $K$, write $K_v$ for the completion of $K$ with respect to the place $v$. Let $\CC_v$ be the completion of an algebraic closure of $K_v$ with respect to the canonical extension of $v$. Fix an embedding $\alg{K} \hookrightarrow \CC_v$. The notion of $v$-adic convergence or divergence of the sequence $(x_n)$ will always be taken relative to the topological space  $\PP^1(\CC_v) = \CC_v \cup \{\infty\}$.

	 If $\alpha \in \alg{K}$ is a root of the polynomial $f$, we will say that $\alpha$ is \textbf{exceptional} if the Newton approximation sequence $(x_n)$ converges $v$-adically to $\alpha$ for at most finitely many places $v$ of $K$. This property depends on the polynomial $f$, but it is independent of the number field $K$ and the sequence $(x_n)$ --- provided this sequence is not eventually periodic. (These are consequences of the following theorem.)


\begin{thm}[Main Theorem]
\label{Thm: Main}
	Let $f$ be a polynomial of degree $d \geq 2$ with coefficients in a number field $K$ and let $x_0 \in K$. Define the Newton map $N(t) = t - f(t) / f'(t)$, and for each $n \geq 0 $, set $x_{n+1} = N(x_n)$. Assume the Newton approximation sequence $(x_n)$ is not eventually periodic. Then the following are true:
	
	\begin{enumerate}
		
		\item \label{Thm: All Places}
			There exists a finite set of places $S$ of $K$, depending only on the polynomial $f$,
			with the following property: if $v$ is not in $S$, then
			either $(x_n)$ converges $v$-adically to a simple root of $f$ or else $(x_n)$ does not converge in
			$\PP^1(K_v)$. In particular, any multiple root of $f$ is exceptional.
		
		\item \label{Thm: Generic} Denote the distinct roots of $f$ in $\alg{K}$ by
			$\alpha = \alpha_1, \alpha_2, \ldots, \alpha_{r}$,
			and write $m_1, \ldots, m_r$ for their multiplicities, respectively. If $\alpha$ is a simple root of $f$,
			define a polynomial
				\benn
					E_\alpha(t) = \sum_{i > 1} m_i \prod_{j \neq 1, i} (t- \alpha_j).
				\eenn
			Then $\alpha$ is an exceptional root of $f$ if and only if $E_\alpha(t) = (d-1)(t-\alpha)^{r-2}$. 
						
		\item\label{Thm: Divergence} The sequence $(x_n)$ diverges in $\PP^1(K_v)$ for infinitely
			many places $v$.
		\end{enumerate}
\end{thm}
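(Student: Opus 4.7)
The plan is to treat the three parts in turn, with Part~(\ref{Thm: Generic}) as the technical core. Reducing to the case $\alpha \in K$ by base extension, set $y_n := x_n - \alpha$ and write $f(t) = (t-\alpha)g(t)$. A direct computation gives the key recursion
\[
y_{n+1} = y_n^2 \,\phi(x_n), \qquad \phi(t) := g'(t)/f'(t),
\]
which drives everything. Fix a finite set $S$ of bad places containing those of bad reduction of $f$, the places dividing $d\prod_i m_i(m_i-1)$, and the places where any relevant coefficient has nonzero valuation.

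For Part~(\ref{Thm: All Places}), note that for $v \notin S$ the multiplier of $N$ at a multiple root $\alpha_i$ is $(m_i-1)/m_i$ (a $v$-adic unit), and the multiplier at $\infty$ is $d/(d-1)$ (also a unit). A standard non-Archimedean estimate shows that near such an indifferent fixed point $\beta$, $|N(t)-\beta|_v = |t-\beta|_v$ on a neighborhood, so the orbit cannot approach $\beta$. Thus outside $S$ any $\PP^1(K_v)$-limit of $(x_n)$ must be a simple root of $f$, and every multiple root is exceptional. Part~(\ref{Thm: Divergence}) then follows quickly: the Silverman--Voloch theorem cited in the introduction produces infinitely many $v$ at which $(x_n)$ fails to converge in $K_v$, and by the indifferent-multiplier observation the only remaining possible $\PP^1$-limit $\infty$ is excluded for $v \notin S$, so $(x_n)$ diverges in $\PP^1(K_v)$ at such places.

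For Part~(\ref{Thm: Generic}), a short manipulation identifies $E_\alpha(t) = h(t)\,g'(t)/g(t)$ with $h(t) := \prod_{j \geq 2}(t-\alpha_j)$, so the exceptional condition rearranges to the polynomial identity
\[
g'(t) = (d-1)(t-\alpha)^{r-2}\prod_{j\geq 2}(t-\alpha_j)^{m_j-1}.
\]
Combined with the factorization $f'(t) = q(t) \prod_{j\geq 2}(t-\alpha_j)^{m_j - 1}$ for a polynomial $q$ of degree $r-1$ satisfying $q(\alpha) \neq 0$, the recursion collapses to $y_{n+1} = (d-1)\,y_n^r / q(x_n)$. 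A valuation-theoretic induction at each $v \notin S$ (using that $q(x_n)$ is a $v$-adic unit whenever $v(x_n) \geq 0$, and that $v(x_n) < 0$ is self-perpetuating under the recursion) then shows that $v(y_n) > 0$ for some $n$ forces $v(y_0) > 0$, so only finitely many $v$ witness convergence and $\alpha$ is exceptional.

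The converse direction will be the main obstacle. When the exceptional identity fails, $\phi(t)$ is not of the form $(d-1)(t-\alpha)^{r-2}/q(t)$, and we must show that $v$-adic convergence to $\alpha$ occurs at infinitely many $v$. By the recursion, this is equivalent to showing that the sequence of values $(\phi(x_k))_k$ admits infinitely many prime divisors. I would attack this via a Zsygmondy/primitive-prime-divisor argument for dynamical sequences (in the style of Silverman's theorems on prime divisors in orbits of rational maps), combined with a canonical-height estimate forcing the heights of $\phi(x_k)$ to grow fast enough that new primes must appear at each large step. Ruling out the possibility that a single fixed finite set of primes accounts for every $\phi(x_k)$ — which would force a rigid algebraic identity along the orbit contradicting the failure of the exceptional condition — is where the delicate work lies.
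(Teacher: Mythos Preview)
Your overall architecture matches the paper's: Part~(\ref{Thm: All Places}) via an indifferent-multiplier computation at multiple roots and at $\infty$, and the ``if'' direction of Part~(\ref{Thm: Generic}) via a backward valuation induction from the totally ramified recursion. (Two small fixes: your set $S$ as written contains the places dividing $\prod_i m_i(m_i-1)$, which is $0$ whenever some $m_i=1$; restrict that product to multiple roots, as the paper does. Also, you do not need $q(x_n)$ to be a $v$-adic unit---only that $v(q(x_n))\ge 0$ when $v(x_n)\ge 0$, which is what actually holds and is all your induction uses.)

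The substantive divergence is the converse of Part~(\ref{Thm: Generic}). The paper does \emph{not} analyze $\phi(x_k)$; it applies the Ingram--Silverman/Faber--Granville primitive prime factor theorem directly to the sequence $y_n = x_n-\alpha$, using that $\alpha$ is a \emph{fixed} point of $N$. That theorem says $y_n$ has a primitive prime factor for all large $n$ if and only if $N$ is not totally ramified at $\alpha$; a two-line computation (Proposition~\ref{Prop: Totally Ramified}) then shows total ramification at $\alpha$ is exactly the identity $E_\alpha(t)=(d-1)(t-\alpha)^{r-2}$. Once you have a prime $\mathfrak p$ with $\mathfrak p\mid y_n$, your own recursion $y_{n+1}=y_n^2\phi(x_n)$ already forces $\mathfrak p^{2^\ell}\mid y_{n+\ell}$, so $\mathfrak p$-adic convergence follows. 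Your route through prime divisors of $\phi(x_k)$ is equivalent (a primitive prime of $y_{n+1}$ must divide $E_\alpha(x_n)$), but it obscures this reduction; and your closing plan---ruling out $S$-unit behavior of $\phi(x_k)$ by a ``rigid algebraic identity along the orbit''---is not a proof. The actual obstruction is Diophantine (Roth/Thue--Mahler), which is precisely the content of the cited primitive prime factor theorem; invoking it directly for $x_n-\alpha$ is both shorter and removes the vagueness.

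For Part~(\ref{Thm: Divergence}), your appeal to Silverman--Voloch plus the exclusion of $\infty$ is valid. The paper instead gives a self-contained argument using the same primitive prime factor theorem applied to $x_n-\gamma$ for an unramified periodic point $\gamma$ of period $q>1$: any such prime traps the reduced orbit in a $q$-cycle, hence away from every root. Either approach works.
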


	The first conclusion of the theorem implies that, while Newton's method may detect roots of a polynomial $f$ for infinitely many places of $K$, it fails to do so for the polynomial $f^2$ because the latter has no simple roots.

	The first conclusion of the theorem is essentially elementary. The second and third conclusions require a theorem from Diophantine approximation to produce primitive prime factors in certain dynamical sequences; see Theorem~\ref{Thm: Diophantine Dynamics}. The third conclusion also follows from a more general result of Silverman and the second author \cite{Silverman_Voloch_2009}. The argument is greatly simplified in our situation, so we give its proof for the sake of completeness.
	

	In complex dynamics, a point $P \in \PP^1(\CC) = \CC \cup \{\infty\}$ is called \textbf{exceptional} for a nonconstant rational function $\phi: \PP^1(\CC) \to \PP^1(\CC)$ if its set of iterated pre-images $\bigcup_{n \geq 1} \phi^{-n}(P)$ is finite. The conclusion of Theorem~\ref{Thm: Main}\eqref{Thm: Generic} can be reformulated to say that a simple root $\alpha$ is an exceptional root of $f$ if and only if $\alpha$ is an exceptional fixed point for the Newton map $N_f$ viewed as a complex dynamical system. (This explains our choice of terminology.) See Proposition~\ref{Prop: Totally Ramified}.

In practical terms, conclusion~\eqref{Thm: Generic} of the Main Theorem gives an algebraic criterion for verifying whether or not a simple root of a given polynomial is exceptional. The following corollary collects a number of the most interesting special cases.

\begin{cor}
\label{Cor: Main Cor}
	Let $K$, $f$ and $(x_n)$ be as in the theorem.
	\begin{enumerate}
		\item If $f$ has only one or two distinct roots, then all roots of $f$ are exceptional.
			In particular, this holds if $f$ is quadratic.\footnote{In \cite[Rem.~10]{Silverman_Voloch_2009} it
            was incorrectly suggested that a quadratic polynomial always has at least one non-exceptional root.}
		
		\item
		\label{Cor: Cubic}
			Suppose $f$ has three distinct roots $\alpha, \beta, \gamma$ with multiplicities $1, b, c$, respectively.
			Then $\alpha$ is an exceptional root if and only
			if
				\[
				\alpha = \frac{b\gamma + c \beta}{\deg(f) - 1}.
				\]
		
		\item	
		\label{Cor: Exceptional}
			Suppose $f$ has degree  $d \geq 3$ and no repeated root. Then at most one root of $f$ is exceptional,
			and it is necessarily $K$-rational. Moreover, $\alpha$ is an exceptional root if and only if there exist
			nonzero $A, B \in K$ such that
				\[
					f(t) = A(t-\alpha)^d + B(t - \alpha).
				\]
		
		\item Suppose $f$ is irreducible over $K$ of degree at least~3. Then $f$ has no exceptional roots.
	\end{enumerate}
\end{cor}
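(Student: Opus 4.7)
The plan is to reduce each part of the corollary to the criterion in Theorem~\ref{Thm: Main}\eqref{Thm: Generic}, namely that a simple root $\alpha$ of $f$ is exceptional if and only if $E_\alpha(t) = (d-1)(t-\alpha)^{r-2}$. The key conceptual observation, used heavily in parts~(3) and~(4), is that when $f$ has no repeated roots (so every $m_i = 1$ and $r = d$), the polynomial $E_\alpha$ coincides with the derivative of
\[
H(t) = \frac{f(t)}{A(t-\alpha)} = \prod_{j=2}^{d}(t-\alpha_j),
\]
where $A$ is the leading coefficient of $f$. This turns the exceptionality criterion into an elementary integration problem.

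For~(1), I would split into cases on the number of distinct roots $r$. If $r=1$, the unique root has multiplicity $d\ge 2$ and is exceptional by Theorem~\ref{Thm: Main}\eqref{Thm: All Places}. If $r=2$, multiple roots are again exceptional by the same statement; for a simple root (with the other of multiplicity $d-1$), the polynomial $E_\alpha$ collapses to the constant $d-1$, which matches $(d-1)(t-\alpha)^{0}$ exactly. The quadratic case falls under $r\le 2$. For~(2) with three roots of multiplicities $1,b,c$ and $d-1=b+c$, one computes directly $E_\alpha(t) = b(t-\gamma)+c(t-\beta)$ and matches it with the linear target $(b+c)(t-\alpha)$, where the leading coefficients automatically agree and the constant terms give the stated formula $\alpha = (b\gamma + c\beta)/(d-1)$.

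The main content is~(3). With no repeated roots, the identity $E_\alpha = H'$ reduces the criterion to $H'(t) = (d-1)(t-\alpha)^{d-2}$, which integrates to $H(t) = (t-\alpha)^{d-1} + C$ for some constant $C$. Multiplying by $A(t-\alpha)$ yields $f(t)=A(t-\alpha)^d + B(t-\alpha)$ with $B=AC$; the hypothesis of no repeated roots forces $B\ne 0$, since $B=0$ would make $\alpha$ a root of multiplicity $d\ge 2$. Conversely, any $f$ of this shape with $A,B\ne 0$ has $\alpha$ as a simple root and satisfies the criterion by reversing the differentiation. For uniqueness of the exceptional root, I would compare the coefficient of $t^{d-1}$ in two putative representations $f = A(t-\alpha)^d + B(t-\alpha) = A(t-\alpha')^d + B'(t-\alpha')$: this coefficient equals $-dA\alpha$, so $\alpha$ is determined by $f$ alone. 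The step most worth care is this last one, which also gives the $K$-rationality of $\alpha$: being canonically recovered from the coefficients of $f\in K[t]$, the point $\alpha$ is Galois-invariant, hence lies in $K$.

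Finally,~(4) is immediate from~(3). An irreducible polynomial over a number field is separable (characteristic zero) and therefore has no repeated roots, so~(3) applies; but any exceptional root would be $K$-rational, contradicting irreducibility of degree $\ge 2$. Hence no root of $f$ is exceptional.
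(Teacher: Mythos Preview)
Your proof is correct and follows essentially the same route as the paper: the identification $E_\alpha = H'$ (the paper writes $g$ for your $H$) is exactly the paper's key observation for part~(3), and the uniqueness/$K$-rationality argument via the $t^{d-1}$ coefficient is identical. The only small omission is that you do not explicitly verify $B \in K$; the paper notes that the linear coefficient of $f$ equals $(-1)^{d-1}Ad\alpha^{d-1} + B$, which gives $B \in K$ once $\alpha \in K$ is known (equivalently, once $\alpha \in K$, your $H = f/(A(t-\alpha))$ lies in $K[t]$, so the constant $C$ and hence $B = AC$ are in $K$).
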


	We will see in Proposition~\ref{Prop: Dyn Equivalent} that two polynomials $f$ and $g$ have conjugate Newton maps if $g(t) = Af(Bt+C)$ for some $A, B, C \in \alg{K}$ with $AB \neq 0$; we call $f$ and $g$ \textbf{dynamically equivalent} if they are related in this way. The first and third conclusions of the above corollary imply the following simple statement:
	
\begin{cor}
\label{Cor: Exceptional Polynomials}
	Let $f \in K[t]$ be a polynomial of degree $d \geq 2$ with no repeated root. Then $f$ has an exceptional root if
	and only if it is dynamically equivalent to $t^d - t$.
\end{cor}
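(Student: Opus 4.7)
The plan is to split the argument according to the degree of $f$. For $d = 2$, a quadratic with distinct roots $\alpha, \beta$ automatically has both roots exceptional by the first part of Corollary~\ref{Cor: Main Cor}, so the left-hand side of the equivalence always holds; and any such quadratic $a(t - \alpha)(t - \beta)$ is dynamically equivalent to $t^2 - t$ via the affine change of variables $t \mapsto (\beta - \alpha)t + \alpha$ combined with scaling by $1/(a(\beta - \alpha)^2)$, so the right-hand side always holds as well. Both directions of the stated equivalence are thus trivially true when $d = 2$.

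For $d \geq 3$, I would apply Corollary~\ref{Cor: Main Cor}\eqref{Cor: Exceptional}, which characterizes the exceptional roots of $f$ as the elements $\alpha \in K$ for which $f(t) = A(t - \alpha)^d + B(t - \alpha)$ with nonzero $A, B \in K$. For the forward direction, assume $f$ has exceptional root $\alpha$. I would choose any $\lambda \in \alg{K}$ satisfying $\lambda^{d-1} = -B/A$ and set $c = 1/(A \lambda^d)$; a direct expansion then gives $c\, f(\lambda t + \alpha) = t^d - t$, exhibiting the desired dynamical equivalence.

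For the converse direction, suppose $g(t) = A f(Bt + C) = t^d - t$ for some $A, B, C \in \alg{K}$ with $AB \neq 0$. Substituting $s = Bt + C$ and solving for $f(s)$ yields
\[
	f(s) = a(s - C)^d + b(s - C), \qquad a = (AB^d)^{-1}, \quad b = -(AB)^{-1},
\]
both nonzero in $\alg{K}$. Since $A, B, C$ a priori lie only in $\alg{K}$, the main step requiring care is a coefficient-comparison descent showing that $a, b, C$ in fact lie in $K$: the leading coefficient of $f$ equals $a$, forcing $a \in K$; the next coefficient $-d a C$ then forces $C \in K$; and finally $b(s - C) = f(s) - a(s - C)^d \in K[s]$ forces $b \in K$. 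A final application of Corollary~\ref{Cor: Main Cor}\eqref{Cor: Exceptional} identifies $C$ as an exceptional root of $f$, completing the proof.
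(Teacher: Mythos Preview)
Your proof is correct and follows essentially the same approach as the paper: split on $d=2$ versus $d\geq 3$, invoke Corollary~\ref{Cor: Main Cor} for the characterization, and exhibit the explicit affine change of variables. In fact your treatment of the converse for $d\geq 3$ is more complete than the paper's --- the paper only writes out the forward direction (special form $\Rightarrow$ dynamically equivalent to $t^d-t$) and leaves the reverse implicit, whereas you supply the coefficient-comparison descent showing $a$, $C$, $b\in K$ so that Corollary~\ref{Cor: Main Cor}\eqref{Cor: Exceptional} applies.
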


The space of polynomials $\mathrm{Poly}_d$ of degree $d > 1$ over $\alg{K}$ has dimension~$d+1$. The subscheme of $\mathrm{Poly}_d$ parameterizing polynomials with an exceptional root has two fundamental pieces: the polynomials with a repeated root (of codimension~1 given by the vanishing locus of the discriminant of $f$) and those with no repeated root. The latter subscheme consists of a single dynamical equivalence class by Corollary~\ref{Cor: Exceptional Polynomials}.

	If $(x_n)$ converges $v$-adically to a root $\alpha$ of $f$, then evidently it is necessary that $\alpha$ lie in~$K_v$. If $\alpha \not\in K$, then the Chebotarev density theorem imposes an immediate restriction on the density of places for which $(x_n)$ can converge. However, one could begin by extending the number field $K$ so that $f$ splits completely, and then this particular Galois obstruction does not appear. It seems that, in general, the collection of places for which $(x_n)$ converges to a root of $f$ is relatively sparse.

\begin{conjecture}[Newton Approximation Fails for 100\% of the Primes]
\label{Conjecture: Newton conjecture}
	Let $f$ be a polynomial of degree $d \geq 2$ with coefficients in a number field $K$ and let $x_0 \in K$. Define the Newton map $N(t) = t - f(t) / f'(t)$, and for each $n \geq 0 $, set $x_{n+1} = N(x_n)$. Assume the Newton approximation sequence $(x_n)$ is not eventually periodic. Let $C(K, f, x_0)$ be the set of places $v$ of $K$ for which $(x_n)$ converges $v$-adically to a root of $f$. Then the natural density of the set $C(K, f, x_0)$ is zero.
\end{conjecture}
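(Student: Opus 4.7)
The plan is to recast the conjecture as a statement about orbits of the reduced Newton map on $\PP^1(\FF_v)$ and then appeal to a heuristic from the statistics of random self-maps of a finite set. By Theorem~\ref{Thm: Main}\eqref{Thm: All Places}, after discarding a finite set $S$ of places, we may assume that $N$ has good reduction at $v$, that a given simple root $\alpha$ reduces to a superattracting fixed point $\bar\alpha$ of the induced morphism $\bar N : \PP^1(\FF_v) \to \PP^1(\FF_v)$, and that its residue disk is strictly attracting. It follows that $(x_n)$ converges $v$-adically to $\alpha$ if and only if the forward orbit of $\bar x_0$ under $\bar N$ eventually hits $\bar\alpha$. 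Since $f$ has only finitely many roots, the conjecture reduces to showing that, for a fixed simple root $\alpha$, the set of places $v$ at which the orbit of $\bar x_0$ meets $\bar\alpha$ has density zero.

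With this reformulation the natural heuristic is that $\bar N$ behaves like a uniformly random self-map of the $(N(v){+}1)$-element set $\PP^1(\FF_v)$. The Pollard $\rho$ model then predicts that a typical forward orbit visits only $O(\sqrt q)$ distinct points before becoming periodic (where $q := N(v)$), so the chance that a given target $\bar\alpha$ lies in the orbit of $\bar x_0$ should be of order $q^{-1/2}$. Summing,
\[
\sum_{N(v) \leq X} N(v)^{-1/2} \ll \frac{X^{1/2}}{\log X} = o\!\bigl(\pi_K(X)\bigr),
\]
which yields density zero. To promote this heuristic to a theorem one would want a uniform equidistribution statement for the iterates $\bar N^n(\bar x_0)$ as $v$ varies --- for instance, a Chebotarev-type input for the arboreal Galois representations attached to $N$ and $\alpha$, coupled with a large-sieve argument to control the cumulative contribution from many iterates.

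The main obstacle is precisely to rigorously control the length of the orbit of a specific point under a nonlinear self-map of $\PP^1(\FF_v)$. Unconditional estimates approaching the $O(\sqrt q)$ suggested by the random-map model are currently unavailable; typically one has only the qualitative statement that orbit sizes tend to infinity with $q$. A purely height-theoretic attack fares little better: from the quadratic convergence of Newton's method one obtains $x_{n+1} - \alpha = (x_n - \alpha)^2 G_\alpha(x_n)$ for a rational function $G_\alpha$ regular at $\alpha$, so the primes newly captured at step $n+1$ must divide $G_\alpha(x_n)$; but the height of $G_\alpha(x_n)$ still grows like $d^n$, which gives a count on the order of $X/\log X$ --- the same order as $\pi_K(X)$. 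A rigorous proof therefore appears to require a substantial new input, such as GRH for the zeta functions of the preimage covers $\Spec K(N^{-n}(\alpha)) / \Spec \OO_K$, or an adelic equidistribution argument in the spirit of Baker--Rumely and Favre--Rivera-Letelier. More modest targets, such as showing upper density strictly less than~$1$, or proving the conjecture for families with accessible arboreal Galois theory (e.g.\ unicritical Newton maps), look like reasonable intermediate goals.
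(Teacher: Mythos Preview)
The statement you are addressing is a \emph{conjecture}; the paper does not prove it either. What the paper offers in place of a proof is exactly what you offer: a reduction to the behavior of the reduced orbit $(\bar x_n)$ on $\PP^1(\FF_v)$, followed by a probabilistic heuristic and some numerical data. Your reduction---that for $v\notin S$ convergence to $\alpha$ is equivalent to the forward orbit of $\bar x_0$ under $\bar N$ meeting the fixed point $\bar\alpha$---coincides with the paper's (this is the content of Theorem~\ref{Thm: Main}\eqref{Thm: All Places} together with the superattracting behavior near a simple root).

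Where you diverge is in the heuristic itself. The paper argues loosely: the orbit $(\bar x_n)$ is eventually periodic of some period $\ell(p)$, the map $\bar N$ has roughly $d$ fixed points but $d^q$ periodic points of period at most $q$, and if the orbit samples $\PP^1(\FF_v)$ uniformly then the event $\ell(p)=1$ should be rare. Your version is sharper and more quantitative: you invoke the random self-map (Pollard $\rho$) model, predict orbit length $O(\sqrt{q})$, deduce hitting probability $O(q^{-1/2})$ for a specific target, and sum over places to get $o(\pi_K(X))$. This is a genuine refinement of the paper's heuristic and, if anything, makes the conjecture more plausible by predicting a rate. Your subsequent discussion of obstructions (lack of uniform orbit-length bounds, failure of the na\"ive height-theoretic count, the possible need for arboreal Galois input or adelic equidistribution) goes well beyond what the paper says and is a valuable addition.

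To be clear, though: neither your argument nor the paper's is a proof, and you correctly flag this. There is no ``gap'' to name because you have not claimed a proof; you have given a more detailed heuristic than the paper's own, together with an honest assessment of what a rigorous argument would require.
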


In Section~\ref{Sec: Density} we give a heuristic argument and some numerical evidence for this conjecture. 
We also formulate an amusing ``dynamical prime number race'' problem. The next section will be occupied with some preliminary facts about the Newton map. We will prove the main result and its corollaries in Section~\ref{Sec: Main Proof}, and in the final section we make some remarks on the function field case.

\medskip

\noindent
\textbf{Acknowledgments:} The first author was supported by a National Science Foundation Postdoctoral Research Fellowship. The second author would like to acknowledge the support of his research by NSA grant MDA904-H98230-09-1-0070. Both authors thank the number theory group and the CRM in Montreal for funding the visit during which this work first began. 
	
	
\section{Basic Geometry of the Newton Map}
\label{Sec: Newton Map}

	In this section we work over an algebraically closed field $L$ of characteristic zero.

	For a nonconstant polynomial $f \in L[t]$, we may view the Newton map $N = N_f$ as a dynamical system on the projective line $\PP^1_L$. The (topological) degree of $N$ is equal to the number of distinct roots of $f$, and the roots of $f$ are fixed points of $N$. We begin by recalling the proofs of these facts.

\begin{prop}
\label{Prop: Degree}
	Let $f \in L[t]$ be a nonconstant polynomial, and let $N(t) = t - f(t) / f'(t)$ be the associated Newton map on $\PP^1_L$. If $f$ is linear, then $N$ is a constant map. If $\deg(f) > 1$, and if $f$ has $r$ distinct roots, then $N$ has degree~$r$.
\end{prop}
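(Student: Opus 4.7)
The plan is to handle the linear case by direct computation and then analyze the rational function expression of $N$ in the remaining case by identifying and cancelling a common factor between numerator and denominator.

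First, if $f(t) = at + b$ with $a \neq 0$, then $f'(t) = a$ and so $N(t) = t - (at+b)/a = -b/a$, a constant. This disposes of the linear case.

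Now assume $d = \deg(f) > 1$, and write $f(t) = c\prod_{i=1}^{r}(t-\alpha_i)^{m_i}$ with $c \neq 0$ and $\sum m_i = d$. Clearing denominators gives
\[
N(t) = \frac{t f'(t) - f(t)}{f'(t)}.
\]
The denominator $f'(t)$ has degree $d-1$, and the numerator $tf'(t) - f(t)$ has leading coefficient $cd - c = c(d-1) \neq 0$ since we are in characteristic zero and $d > 1$; thus the numerator has degree exactly $d$. Naively this would suggest $N$ has degree $d$, so the main work is to detect cancellation between numerator and denominator.

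The key observation is that each factor $(t-\alpha_i)^{m_i - 1}$ divides both $f'(t)$ and $f(t)$, hence also divides $tf'(t) - f(t)$. Setting $p(t) = \prod_{i=1}^r (t-\alpha_i)^{m_i-1}$, which has degree $d - r$, I would define
\[
u(t) = \frac{tf'(t) - f(t)}{p(t)}, \qquad h(t) = \frac{f'(t)}{p(t)},
\]
so that $N = u/h$ with $\deg u = r$ and $\deg h = r-1$, and $h(\alpha_i) \neq 0$ for every $i$ by construction.

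The final and most delicate step is to verify that $u$ and $h$ are coprime, so that the fraction $u/h$ is in lowest terms and $\deg N = \max(\deg u, \deg h) = r$. Suppose $\beta$ is a common root of $u$ and $h$. Then $h(\beta) = 0$ forces $\beta \neq \alpha_i$ for all $i$, so $p(\beta) \neq 0$; therefore $f'(\beta) = 0$ and $\beta f'(\beta) - f(\beta) = 0$, which together yield $f(\beta) = 0$. But then $\beta$ must be one of the $\alpha_i$, a contradiction. This common-factor analysis is the main technical obstacle, though it is mild; once it is in place, the degree count is immediate.
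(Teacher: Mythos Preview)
Your argument is correct and is essentially the same as the paper's: both cancel the common factor $\prod_i (t-\alpha_i)^{m_i-1}$ from numerator and denominator and then verify coprimality by checking that the reduced denominator does not vanish at any $\alpha_i$. The only cosmetic difference is that the paper writes the reduced denominator explicitly as $D(t)=\sum_i m_i\prod_{j\neq i}(t-\alpha_j)$ (your $h$ equals $cD$) and reads off $D(\alpha_i)\neq 0$ directly, whereas you phrase the same fact as ``$(t-\alpha_i)^{m_i-1}$ is the exact power of $(t-\alpha_i)$ dividing $f'$''; the explicit $D$ is later reused in the paper, but for this proposition the two presentations are interchangeable.
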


\begin{proof}
	First suppose $f(t) = At+B$ for some $A,B \in L$ with $a \neq 0$. Then $N(t) = -B / A$.
	
	Now assume $\deg(f) > 1$. If the distinct roots of $f$ are $\alpha_1, \ldots, \alpha_r$ with multiplicities $m_1, \ldots, m_r$, respectively, we can write $f(t) = C \prod_{i =1}^r (t - \alpha_i)^{m_i}$ for some nonzero constant~$C$. Define
	\be
	\label{Eq: D(t)}
		D(t) = \sum_{i=1}^r m_i \prod_{j \neq i} (t- \alpha_j).
	\ee
Then $f'(t) = C \cdot D(t) \prod (t-\alpha_i)^{m_i-1}$, and
	\be
	\label{Eq: Reduced phi}
		N(t) = t - \frac{(t-\alpha_1)\cdots (t-\alpha_r)}{D(t)} = \frac{tD(t) - (t-\alpha_1)\cdots (t-\alpha_r)}{D(t)}.
	\ee
Since $D(\alpha_i) \neq 0 $ for any $i = 1, \ldots, r$, it follows that the numerator and denominator of this last expression for $N$ have no common factor.

	The leading term of $D(t)$ is $(\sum m_i) t^{r-1} = \deg(f) t^{r-1}$, and so the leading term of the numerator in~\eqref{Eq: Reduced phi} is $(\deg(f) - 1)t^r$. As we have assumed $\deg(f) > 1$, we find $N$ has degree~$r$.
\end{proof}

\begin{cor}
\label{Cor: Fixed Points}
	Let $f \in L[t]$ be a polynomial of degree at least two, and let $N$ be the associated Newton map on $\PP^1_L$. If the distinct roots of $f$ are $\alpha_1, \ldots, \alpha_r$, then the set of fixed points of $N$ is $\{\alpha_1, \ldots, \alpha_r, \infty\}$.
\end{cor}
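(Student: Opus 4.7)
The plan is to read off the fixed points directly from the reduced form of $N$ computed in the proof of Proposition~\ref{Prop: Degree}. Recall that there we wrote
\[
N(t) = \frac{tD(t) - (t-\alpha_1)\cdots(t-\alpha_r)}{D(t)},
\]
with $D(t)$ as in \eqref{Eq: D(t)}, and we checked that numerator and denominator share no common factor. So the fixed points of $N$ on $\PP^1_L$ are exactly the zeros of the homogeneous form of $N(t)-t$ on $\PP^1_L$, computed from the reduced representation.

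First I would handle the finite fixed points. For $\beta \in L$, the condition $N(\beta)=\beta$ (with $D(\beta)\neq 0$) is equivalent to
\[
\beta\, D(\beta) - (\beta - \alpha_1)\cdots (\beta-\alpha_r) = \beta\, D(\beta),
\]
which reduces to $(\beta-\alpha_1)\cdots(\beta-\alpha_r) = 0$, forcing $\beta = \alpha_i$ for some $i$. Conversely, for each $i$ we have $D(\alpha_i) = m_i \prod_{j\neq i}(\alpha_i - \alpha_j) \neq 0$, so the formula above gives $N(\alpha_i) = \alpha_i$ legitimately (no indeterminate form). If $\beta$ is a pole of the reduced form, then $D(\beta)=0$; such a $\beta$ satisfies $N(\beta)=\infty\neq\beta$, so produces no finite fixed point.

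Finally, $\infty$ is a fixed point because the numerator of the reduced form has degree $r$ with leading coefficient $\deg(f)-1\neq 0$, while the denominator $D(t)$ has degree $r-1$ with leading coefficient $\deg(f)$; hence $N$ has a simple pole at $\infty$, and $N(\infty)=\infty$ in $\PP^1_L$. Combining the two cases gives the fixed-point set $\{\alpha_1,\ldots,\alpha_r,\infty\}$.

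There is no real obstacle here: once the reduced form from Proposition~\ref{Prop: Degree} is in hand, the assertion is a one-line polynomial identity together with a degree count at infinity. The only mild care needed is to verify the non-vanishing $D(\alpha_i)\neq 0$, so that substitution into the reduced form is valid.
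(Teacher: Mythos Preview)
Your argument is correct. The route differs slightly from the paper's: both proofs verify that each $\alpha_i$ and $\infty$ are fixed, but to rule out any further fixed points the paper invokes the general fact that a rational map of degree~$r$ has at most $r+1$ fixed points, whereas you instead solve the equation $N(\beta)=\beta$ directly, splitting into the cases $D(\beta)\neq 0$ and $D(\beta)=0$. Your approach is a touch more self-contained (it avoids citing the fixed-point count for rational maps), while the paper's counting argument is shorter once that fact is granted. One small wording quibble: saying ``$N$ has a simple pole at $\infty$'' is slightly awkward, since what you mean is that $N(\infty)=\infty$; the degree comparison you give already justifies this cleanly.
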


\begin{proof}
	From~\eqref{Eq: Reduced phi}, we see that each $\alpha_i$ is a fixed point of $N$. Since the numerator has strictly larger degree than the denominator, $\infty$ must also be fixed. A rational map of degree~$r$ has at most $r+1$ distinct fixed points, so we have found all of them.
\end{proof}

In fact, one can check that $\gamma \in \PP^1(L)$ is a ramified fixed point of $N$ if and only if $\gamma$ is a simple root of $f$. We have no explicit need for this fact, although it is the fundamental reason why simple roots play such a prominent role in our main results.

Recall that if $f$ is a polynomial with distinct roots $\alpha = \alpha_1, \ldots, \alpha_r$ of multiplicities $m_1, \ldots, m_r$, respectively, and if we assume $m_1 = 1$, then we defined the quantity
	\[
		E_\alpha(t) = \sum_{i > 1} m_i \prod_{j \neq 1, i} (t - \alpha_j).
	\]
It follows that
	\benn
		\ba
				D(t) = \sum_{i=1}^r m_i \prod_{j \neq i} (t - \alpha_j) &
				= (t-\alpha_2) \cdots (t - \alpha_r) + \sum_{i > 1} m_i \prod_{j \neq i} (t-\alpha_j) \\
				&= (t-\alpha_2) \cdots (t - \alpha_r) + (t-\alpha)E_\alpha(t).
		\ea
	\eenn
Therefore
	\benn
		\ba
			N(t) &= t - \frac{(t-\alpha)(t-\alpha_2) \cdots (t - \alpha_r)}{D(t)} \\
				&= \alpha + (t-\alpha) \left(1 - \frac{(t- \alpha_2) \cdots (t- \alpha_r)}{D(t)} \right)
				= \alpha + (t-\alpha)^2 \frac{E_\alpha(t)}{D(t)}.
		\ea
	\eenn
Since the leading term of $E_\alpha(t)$ is evidently $(d-1)t^{r-2}$, and since $D(\alpha) \neq 0$, we have proved

\begin{prop}
\label{Prop: Totally Ramified}
	Let $f \in L[t]$ be a polynomial of degree $d > 1$ with $r > 1$ distinct roots, and let $\alpha$ be a simple root of $f$. Then the Newton map $N_f$ is totally ramified at the fixed point $\alpha$ if and only if $E_\alpha(t) = (d-1)(t-\alpha)^{r-2}$.
\end{prop}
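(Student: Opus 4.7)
The plan is to read off the conclusion from the closed form for $N(t)$ that has just been established in the displayed computation preceding the statement, namely
\[
N(t) - \alpha = (t-\alpha)^2 \, \frac{E_\alpha(t)}{D(t)}.
\]
Since $\alpha$ is a fixed point of $N_f$ mapping to itself, total ramification at $\alpha$ is the statement that the local degree (ramification index) of $N_f$ at the source point $\alpha$ equals the global degree of $N_f$. By Proposition~\ref{Prop: Degree}, that global degree is $r$, so total ramification at $\alpha$ is equivalent to the assertion that $t = \alpha$ is a zero of $N(t) - \alpha$ of order exactly $r$.

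Next I would check that $D(\alpha) \neq 0$, so that the factor $E_\alpha(t)/D(t)$ really does contribute its naive order of vanishing and nothing extra. This is immediate from the definition $D(t) = \sum_i m_i \prod_{j \neq i}(t-\alpha_j)$: plugging in $t = \alpha = \alpha_1$ kills every term of the sum except the $i=1$ term, and what remains is $m_1 \prod_{j>1}(\alpha - \alpha_j)$, which is nonzero because $m_1 = 1$ and the roots $\alpha_j$ are distinct. Combining this with the formula above, the order of vanishing of $N(t) - \alpha$ at $\alpha$ is $2 + \ord_\alpha E_\alpha(t)$, so total ramification at $\alpha$ is equivalent to $\ord_\alpha E_\alpha(t) = r-2$.

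Finally I would translate this order-of-vanishing condition into the claimed equality. The polynomial $E_\alpha(t)$ has degree $r-2$ with leading coefficient $\sum_{i>1} m_i = d - m_1 = d-1$, as noted just before the statement. A polynomial of degree $r-2$ that vanishes to order $r-2$ at $\alpha$ is forced to be the constant multiple $(d-1)(t-\alpha)^{r-2}$ of its leading term, and conversely any such polynomial vanishes to order $r-2$ at $\alpha$. This gives the biconditional. There is no real obstacle here, since the substantive calculation has already been carried out in the lead-up to the proposition; the only thing to be a little careful about is the non-vanishing of $D(\alpha)$, which is what guarantees that the rational function $E_\alpha(t)/D(t)$ does not accidentally introduce or cancel a zero at $t=\alpha$.
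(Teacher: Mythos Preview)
Your proposal is correct and follows exactly the paper's approach: the paper derives the formula $N(t)-\alpha=(t-\alpha)^2 E_\alpha(t)/D(t)$, observes that $E_\alpha$ has leading term $(d-1)t^{r-2}$ and that $D(\alpha)\neq 0$, and declares the proposition proved. You have simply spelled out the remaining logical steps (identifying total ramification with order of vanishing equal to $r$, and converting $\ord_\alpha E_\alpha = r-2$ into the closed form) more explicitly than the paper does.
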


	Recall from the introduction that two polynomials $f, g \in L[t]$ are \textbf{dynamically equivalent} if $g(t) = A f(Bt +C)$ for some $A, B, C \in L$ with $AB \neq 0$. Evidently this is an equivalence relation on the space of polynomials $L[t]$. The Newton maps of dynamically equivalent polynomials share the same dynamical behavior.
	
\begin{prop}
\label{Prop: Dyn Equivalent}
	Suppose $f, g \in L[t]$ are dynamically equivalent polynomials related by $g(t) = Af(Bt+C)$ with $A, B, C \in L$ and $AB \neq 0$. Let $\sigma(t) = Bt + C$. Then $N_g = \sigma^{-1} \circ N_f \circ \sigma$.
\end{prop}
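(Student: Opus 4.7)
The plan is a direct computation using the chain rule, since the statement is essentially the assertion that the Newton map behaves naturally under affine change of variable on $\PP^1_L$. Because $\sigma$ is an affine automorphism of $\PP^1_L$ fixing $\infty$, conjugation by $\sigma$ is compatible with the polynomial structure, and the factor of $A$ in $g = A f \circ \sigma$ cancels between numerator and denominator in $g/g'$.

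First I would differentiate $g(t) = Af(\sigma(t))$ via the chain rule to obtain $g'(t) = AB\, f'(\sigma(t))$, using $\sigma'(t) = B$. Substituting into the defining formula for the Newton map gives
\[
N_g(t) \;=\; t - \frac{g(t)}{g'(t)} \;=\; t - \frac{A f(\sigma(t))}{AB f'(\sigma(t))} \;=\; t - \frac{1}{B}\cdot \frac{f(\sigma(t))}{f'(\sigma(t))}.
\]
Next I would compute the right-hand side $\sigma^{-1} \circ N_f \circ \sigma$ directly. Since $\sigma^{-1}(u) = (u-C)/B$, we have
\[
\sigma^{-1}\bigl(N_f(\sigma(t))\bigr) \;=\; \frac{1}{B}\left( \sigma(t) - \frac{f(\sigma(t))}{f'(\sigma(t))} - C \right) \;=\; \frac{1}{B}\left( Bt - \frac{f(\sigma(t))}{f'(\sigma(t))} \right),
\]
which equals the expression derived for $N_g(t)$ above. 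This completes the identification.

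There is essentially no obstacle here; the only thing to watch is the placement of the scalar $A$ (which drops out) versus the scalar $B$ (which does not, and appears precisely where it must to account for the derivative of $\sigma$). I would also briefly remark that the formula is valid as an identity of rational functions, so it extends to all of $\PP^1_L$ including the common fixed point at $\infty$, consistent with Corollary~\ref{Cor: Fixed Points}.
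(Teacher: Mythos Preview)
Your proof is correct and follows essentially the same direct computation as the paper: differentiate $g$ via the chain rule, cancel the factor $A$, and identify the resulting expression with $\sigma^{-1}\circ N_f\circ \sigma$. The only cosmetic difference is that the paper manipulates $N_g(t)$ into the form $\frac{1}{B}N_f(Bt+C) - \frac{C}{B}$ directly, whereas you compute both sides separately and compare.
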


\begin{proof}
	The proof is a direct computation:
		\benn
			\ba
				N_g(t) = t - \frac{g(t)}{g'(t)} &= t - \frac{f(Bt+C)}{Bf'(Bt+C)} \\
					&= \frac{1}{B}\left( Bt + C - \frac{f(Bt + C)}{f'(Bt+C)}\right) - \frac{C}{B} \\
					&= \frac{1}{B}N_f(Bt + C) - \frac{C}{B}
					= \sigma^{-1} \circ N_f \circ \sigma(t).
			\ea
		\eenn
\end{proof}


\section{Proofs of the Main Results}
\label{Sec: Main Proof}

	For the duration of this section, we will assume the following to be fixed:

\begin{table}[h]
\begin{tabular}{l l}
	$K$ &  number field with ring of integers $\OO_K$ \\
	 $f$ & fixed polynomial of degree $d > 1$ with coefficients in $K$ \\
	 $N$ & Newton map for $f$ as in \eqref{Eq: Newton Definition} \\
	 $x_0$ & element of $K$ \\
	 $(x_n)$ & sequence defined by $x_{n+1} = N(x_n)$; assume it is \\ 
	 & not eventually periodic
\end{tabular}
\end{table}

	The letter $\pp$ will always denote a nonzero prime ideal of $\OO_K$. For such $\pp$ and for $\alpha \in K^\times$, we say that $\pp$ divides the numerator of $\alpha$ (resp. the denominator of $\alpha$) if $\ord_\pp(\alpha) > 0$ (resp. $\ord_\pp(\alpha) < 0$). We also write $\pp^\ell \mid \alpha$ (resp. $\pp^\ell \mid\mid \alpha$) to mean that $\ord_\pp(\alpha) \geq \ell$ (resp. $\ord_\pp(\alpha) = \ell$). Also, write $K_\pp$ for the completion of $K$ with respect to the valuation $\ord_\pp$.

\begin{prop}
\label{Prop: Not to infinity}
	Let $S_\infty$ be the finite set of prime ideals $\pp$ of $\OO_K$ such that
	\begin{itemize}
		\item $\ord_\pp(\alpha) <0$ for some root $\alpha$ of $f$; or
		\item $\ord_\pp(\deg(f)) \neq 0$; or
		\item $\ord_\pp(\deg(f) - 1) \neq 0$.
	\end{itemize}
The sequence $(x_n)$ does not converge to $\infty$ in $\PP^1(K_\pp)$ for any $\pp$ outside $S_\infty$.
\end{prop}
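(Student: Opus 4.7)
The plan is to leverage the explicit rational expression for $N$ derived in the proof of Proposition~\ref{Prop: Degree}: if the distinct roots of $f$ are $\alpha_1,\dots,\alpha_r$ with multiplicities $m_1,\dots,m_r$, then $N(t)=A(t)/D(t)$, where $D(t)=\sum_{i=1}^r m_i\prod_{j\neq i}(t-\alpha_j)$ has leading term $d\,t^{r-1}$ and $A(t):=tD(t)-(t-\alpha_1)\cdots(t-\alpha_r)$ has leading term $(d-1)\,t^r$. Crucially, this presentation is independent of the leading coefficient of $f$, and the remaining coefficients of $A$ and $D$ are $\ZZ$-polynomial expressions in the $\alpha_i$ and $m_i$.

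First I would check that for $\pp\notin S_\infty$ every coefficient of $A$ and $D$ is $\pp$-integral. The first clause in the definition of $S_\infty$ forces each $\alpha_i$ to be $\pp$-integral in $\alg{K}$ (under any extension of $\ord_\pp$), so every $\ZZ$-polynomial in the $\alpha_i$ is $\pp$-integral as well; the Galois invariance of the coefficients of $A$ and $D$ puts them in $\OO_K$ localized at $\pp$. The other two clauses guarantee that the leading coefficients $d-1$ and $d$ are $\pp$-units.

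With that in place, the ultrametric inequality gives the key estimate directly. If $t\in K_\pp$ satisfies $|t|_\pp>1$, then the leading term of $A(t)$ has absolute value $|t|_\pp^r$ and strictly dominates every lower-order term (each of which has absolute value at most $|t|_\pp^{r-1}$), so $|A(t)|_\pp=|t|_\pp^r$; the same argument yields $|D(t)|_\pp=|t|_\pp^{r-1}$. Consequently $|N(t)|_\pp=|t|_\pp$ whenever $|t|_\pp>1$.

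To finish: either $|x_n|_\pp\le 1$ for every $n$, in which case the sequence is $\pp$-adically bounded and certainly does not tend to $\infty$; or there is a least $n_0$ with $|x_{n_0}|_\pp>1$, and then by induction $|x_n|_\pp=|x_{n_0}|_\pp$ for all $n\ge n_0$, so again $|x_n|_\pp\not\to\infty$. Either way $(x_n)$ does not converge to $\infty$ in $\PP^1(K_\pp)$. The argument is essentially elementary; the only step that requires real care is the first, where one must extract $\pp$-integrality of the coefficients of $A$ and $D$ from the hypothesis $\pp\notin S_\infty$, which is precisely what the first clause of that definition is engineered to provide.
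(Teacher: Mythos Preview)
Your argument is correct and is essentially the same as the paper's: both establish that $\ord_\pp(N(t))=\ord_\pp(t)$ whenever $\ord_\pp(t)<0$, using that the leading coefficients $d$ and $d-1$ of $D$ and $A$ are $\pp$-adic units while all lower coefficients are $\pp$-integral. The only cosmetic difference is that the paper performs the substitution $y=1/t$ (introducing the reciprocal polynomial $D^*$) and reads off the constant $\frac{d-1}{d}$ modulo $\pp$, whereas you stay in the original coordinate and let the ultrametric inequality isolate the leading terms directly.
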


\begin{proof}
	Let $D$ be the polynomial given by \eqref{Eq: D(t)}. It was shown that $\deg(D) = r-1$. Define its reciprocal polynomial to be
	\[
		D^*(t) = t^{r-1}D(1/t) = \sum_{i=1}^r m_i (1 - \alpha_i t).
	\]
In particular, note that $D^*(0) = \sum m_i = \deg(f)$. By \eqref{Eq: Reduced phi}, we have
		\benn
			N(1/t) = \frac{D^*(t) - (1 - \alpha_1 t) \cdots (1- \alpha_r t)}{t D^*(t)}.
		\eenn
		
	Fix $\pp \not\in S_\infty$ and suppose $x_n$ is such that $\ord_\pp(x_n) = \ell < 0$. Then $x_n \neq 0$, and we write $y_n = 1 / x_n$. Hence
		\benn
			x_{n+1} = N(x_n) = N(1 / y_n) = \frac{D^*(y_n) - (1-\alpha_1 y_n) \cdots (1- \alpha_r y_n)}{y_n D^*(y_n)}.
		\eenn
As $\pp \not\in S_\infty$, we have
	\[
		y_n x_{n+1} = \frac{D^*(y_n) - (1-\alpha_1 y_n) \cdots (1- \alpha_r y_n)}{D^*(y_n)} \equiv
		\frac{\deg(f) - 1}{\deg(f)} \pmod{\pp}.
	\]
Consequently, $\ord_{\pp}(x_{n+1}) = \ell = \ord_\pp(x_n)$. We find $\ord_\pp(x_{n+k}) = \ord_\pp(x_n)$ for all $k \geq 0$ by induction. Hence $(x_n)$ cannot converge to $\infty$.
\end{proof}

\begin{cor}
\label{Cor: To a fixed point}
	Suppose $\pp$ is a prime ideal of $\OO_K$ such that $\pp \not\in S_\infty$, as in Proposition~\ref{Prop: Not to infinity}. If $(x_n)$ converges to $\gamma \in \PP^1(K_\pp)$, then $\gamma$ is a root of $f$.
\end{cor}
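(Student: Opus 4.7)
The plan is to combine the previous proposition with the standard fact that a rational map is continuous on the projective line over a complete non-Archimedean field, together with the description of the fixed points of $N$ from Corollary~\ref{Cor: Fixed Points}.

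First I would observe that, viewed as a morphism $\PP^1_K \to \PP^1_K$, the Newton map $N$ induces a continuous self-map of $\PP^1(K_\pp)$ with respect to its $\pp$-adic (chordal) topology: on the affine chart $\CC_\pp$ the formula $N(t) = t - f(t)/f'(t)$ is continuous away from zeros of $f'$, and at any such pole one passes to the chart centered at $\infty$ using $1/N(1/s)$, which is likewise given by a ratio of polynomials with no common root at the relevant point after clearing the denominator $f'$. In particular, this continuity holds at every point of $\PP^1(K_\pp)$, including $\gamma$.

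Next, assuming $x_n \to \gamma$ in $\PP^1(K_\pp)$, the continuity of $N$ gives
\[
	N(\gamma) = N\bigl(\lim_{n\to \infty} x_n\bigr) = \lim_{n\to \infty} N(x_n) = \lim_{n\to \infty} x_{n+1} = \gamma,
\]
so $\gamma$ is a fixed point of $N$. By Corollary~\ref{Cor: Fixed Points}, the set of fixed points of $N$ in $\PP^1(\alg{K_\pp})$ is $\{\alpha_1, \dots, \alpha_r, \infty\}$, so $\gamma$ lies in this set. Proposition~\ref{Prop: Not to infinity} rules out the possibility $\gamma = \infty$ when $\pp \notin S_\infty$, and therefore $\gamma$ is a root of $f$.

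The only step that requires a little care is the continuity assertion, since $N$ has poles at the zeros of $f'$; the cleanest way to handle this is to recall that every nonconstant rational map $\PP^1 \to \PP^1$ over a complete non-Archimedean field is automatically continuous (even Lipschitz) in the chordal metric, so no case analysis at poles is actually needed. With that in hand the corollary is immediate from the two preceding results.
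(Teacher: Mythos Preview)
Your argument is correct. The paper's proof follows the same outline---first rule out $\gamma = \infty$ via Proposition~\ref{Prop: Not to infinity}, then pass to the limit---but carries out the second step more concretely: it writes the recursion as
\[
	x_{n+1} = x_n - \frac{(x_n - \alpha_1)\cdots(x_n - \alpha_r)}{D(x_n)},
\]
lets $n \to \infty$, and reads off $(\gamma - \alpha_1)\cdots(\gamma - \alpha_r) = 0$ directly. Your version is slightly more conceptual: you invoke the general continuity of a rational map on $\PP^1(K_\pp)$ to conclude $\gamma$ is a fixed point, and then appeal to Corollary~\ref{Cor: Fixed Points} to identify it as a root. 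The paper's computation implicitly assumes $D(\gamma) \neq 0$ (easily justified, since a zero of $D$ is a pole of $N$ and would force $x_{n+1} \to \infty$), whereas your chordal-metric continuity argument handles the poles uniformly without a separate case. Either way the content is the same.
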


\begin{proof}
	By Proposition~\ref{Prop: Not to infinity} we see that $\gamma \neq \infty$. Formula~\eqref{Eq: Reduced phi} for $N$ with $t = x_n$ gives
	\[
		x_{n+1} = x_n - \frac{(x_n - \alpha_1) \cdots (x_n - \alpha_r)}{D(x_n)}.
	\]
Letting $n \to \infty$ and subtracting $\gamma$ from both sides yields
	\[
		\frac{(\gamma - \alpha_1) \cdots (\gamma - \alpha_r)}{D(\gamma)} = 0,
	\]
from which the result follows.
\end{proof}

With these preliminaries in hand,  the theorem is a relatively easy consequence of the following result of Ingram and Silverman on primitive prime factors in dynamical sequences. This result was later made effective by the first author and Granville. For the statement, recall that if $(y_n)$ is a sequence of nonzero elements of a number field $K$, we say a prime ideal $\pp$ is a primitive prime factor of the numerator of $y_n$ if $\ord_\pp(y_n) > 0$ but $\ord_\pp(y_m) = 0$ for all $m < n$.

\begin{thm}[\cite{Ingram_Silverman_2009, Faber_Granville_Crelle_2010}]
\label{Thm: Diophantine Dynamics}
	Let $K$ be a number field and let $\phi \in K(t)$ be a rational function of degree at least~2, let $\gamma \in K$ be a periodic point for $\phi$, and let $x_0 \in K$ be a point with infinite $\phi$-orbit; i.e., the sequence defined by $x_{n+1} = \phi(x_n)$ for $n \geq 0$ is not eventually periodic. Then for all sufficiently large $n$, the element $x_n - \gamma$ has a primitive prime factor in its numerator if and only if $\phi$ is not totally ramified at $\gamma$.
\end{thm}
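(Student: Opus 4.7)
The plan is to handle the two directions of the biconditional separately, since one is essentially formal while the other requires a genuine Diophantine input. After replacing $\phi$ by a suitable iterate, I would reduce to the case in which $\gamma$ is a fixed point, using the fact that a prime which is primitive for the iterated sequence is primitive for the original sequence outside of a finite exceptional set. Write $d = \deg(\phi)$ and let $e$ denote the local ramification index of $\phi$ at $\gamma$, so total ramification at $\gamma$ is exactly the condition $e = d$.

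The first (formal) implication is that total ramification precludes primitive prime factors for large $n$. If $e = d$, then $\phi(t) - \gamma$ has a zero of order $d$ at $\gamma$ and no other zero on $\PP^1_{\alg{K}}$, so one can write $\phi(t) - \gamma = c(t-\gamma)^d / Q(t)$ for some $c \in K^\times$ and some $Q \in K[t]$ that does not vanish at $\gamma$. For every prime $\pp$ outside a finite set $S_0$ depending only on $c$, $Q$, and $\gamma$, the identity
\[
\ord_\pp(x_{n+1} - \gamma) \;=\; d \cdot \ord_\pp(x_n - \gamma) \quad \text{whenever } \ord_\pp(x_n - \gamma) > 0
\]
holds; induction then confines the set of primes dividing the numerators of the $x_n - \gamma$ to a set that stabilizes in finitely many steps, so no primitive prime appears for $n$ large.

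The reverse implication is the substance of the theorem, and I would argue it by contradiction. Assume $e < d$ but $x_n - \gamma$ lacks a primitive prime factor along an infinite sequence of indices $n$. Then the primes dividing the numerators lie in a fixed finite set $S$, and a local computation analogous to the one above gives $\ord_\pp(x_{n+1} - \gamma) \leq e \cdot \ord_\pp(x_n - \gamma) + O(1)$ at each $\pp \in S$. Consequently the nonarchimedean contribution to the absolute logarithmic height $h(x_n)$ coming from the pole of the local height at $\gamma$ grows at most like $e^n$. On the other hand, the Call--Silverman canonical height satisfies $\hat h_\phi(x_n) = d^n \hat h_\phi(x_0)$ and differs from $h(x_n)$ by a bounded amount, so $h(x_n) \asymp d^n$. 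A refined local decomposition then forces $x_n$ to approximate $\gamma$ to order at least $d/e > 1$ along an infinite sequence, which contradicts Roth's theorem in its $S$-integral form, completing the proof.

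The main obstacle is the bookkeeping in this converse direction: turning the heuristic ``local heights at $\gamma$ grow by a factor $e$ per iteration'' into a clean enough inequality to feed into Roth's theorem. One must carefully track how the finite set of primes supporting $x_n - \gamma$ stabilizes under the no-primitive-prime hypothesis, control the archimedean contributions where the local height at $\gamma$ can be negative, and combine all places into a single product-formula estimate that isolates a genuine Roth-type approximation of $\gamma$ by $S$-integers. The effective refinement of Faber--Granville replaces Roth by a Baker-type bound on linear forms in logarithms, but the qualitative statement as stated requires only Roth.
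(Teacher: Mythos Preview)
The paper does not prove this theorem at all: Theorem~\ref{Thm: Diophantine Dynamics} is quoted from \cite{Ingram_Silverman_2009, Faber_Granville_Crelle_2010} and then invoked as a black box in the proof of the Main Theorem. There is therefore no ``paper's own proof'' to compare your proposal against; your write-up is effectively a sketch of the arguments in those references rather than of anything in this paper.

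On the substance of your sketch: the easy direction (total ramification $\Rightarrow$ no primitive primes for large $n$) is correct and is exactly the elementary local computation one expects. For the converse, you have named the right ingredients --- canonical height growth $h(x_n)\asymp d^n$ and a Roth-type input --- and this is indeed the Ingram--Silverman strategy. However, the logical chain as written has a gap. From ``$x_n-\gamma$ lacks a primitive prime factor along an infinite sequence of indices $n$'' you cannot conclude that ``the primes dividing the numerators lie in a fixed finite set $S$''; non-primitivity of a prime of $x_n-\gamma$ only says it divides some earlier $x_m-\gamma$, and the union of such primes over all $m<n$ need not be bounded as $n\to\infty$. The actual argument compares, for a \emph{single} large $n$ with no primitive prime, the full height $h(x_n)$ against the total contribution of the primes of $x_n-\gamma$, each of which already appeared at some earlier stage; the local recursion bounds that earlier contribution, and Roth (in Silverman's dynamical form) is used to show that $x_n$ cannot be too $v$-adically close to $\gamma$ relative to its height, not to produce an $S$-unit equation from a fixed finite $S$. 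Your inequality $\ord_\pp(x_{n+1}-\gamma)\le e\cdot\ord_\pp(x_n-\gamma)+O(1)$ is also not the one that drives the argument; once $\pp$ divides $x_m-\gamma$, the relevant bound is $\ord_\pp(x_n-\gamma)\le d^{\,n-m}\ord_\pp(x_m-\gamma)+O(1)$, and the gain comes from summing these over $\pp$ and comparing with $\sum_{m<n} h(x_m)\ll d^{n-1}$ versus $h(x_n)\asymp d^n$.
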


\begin{proof}[Proof of the Main Theorem]
	Without loss of generality, we may enlarge the field $K$ so that it contains the roots of $f$.

	Suppose $\alpha$ is a root of $f$ with multiplicity $m$. Write $ f(t) = (t-\alpha)^mg(t)$ for some polynomial~$g$ that does not vanish at $\alpha$. Then
	\be
	\label{Eq: Newton recentered}
		\ba
		N(t)  &= \alpha + (t- \alpha)  - \frac{(t-\alpha) g(t) }{mg(t) + (t-\alpha)g'(t)} \\
		&= \alpha + (t-\alpha)\left(\frac{(m-1)g(t) + (t-\alpha)g'(t)}{mg(t) + (t-\alpha)g'(t)}\right).
		\ea
	\ee
Let $S_\alpha$ be the finite set of prime ideals $\pp$ of $\OO_K$ dividing at least one of the following:
\begin{itemize}
	\item the numerator or denominator of $g(\alpha) \neq 0$;
	\item the numerator or denominator of a coefficient of $g$;
	\item the multiplicity $m$; or
	\item the integer $m - 1$, provided that $m \neq 1$.
\end{itemize}

	Assume first that $m > 1$. For each $n \geq 0$, equation~\eqref{Eq: Newton recentered} gives
	\benn
		x_{n+1} - \alpha = N(x_n) - \alpha
		= (x_n - \alpha )\left(\frac{(m-1)g(x_n) + (x_n-\alpha)g'(x_n)}
			{mg(x_n) + (x_n-\alpha)g'(x_n)}\right).
	\eenn
If $\pp \not\in S_\alpha$ is a prime ideal of $\OO_K$ such that $\pp^\ell \mid\mid x_n - \alpha$ for some $\ell > 0$, we see 
	\[
		m(m-1)g(x_n) \equiv m(m-1) g(\alpha) \not \equiv 0 \pmod {\pp}.
	\]
 Consequently, $\pp^\ell \mid\mid (x_{n+1} - \alpha)$. By induction, we have $\pp^\ell \mid\mid (x_{n+k} - \alpha)$ for all $k \geq 0$. This shows $(x_n)$ does not converge $\pp$-adically to $\alpha$ for any $\pp$ outside of $S_\alpha$.

	We have just shown that $(x_n)$ converges $v$-adically to a multiple root of $f$ for at most finitely many places $v$. Combining this conclusion with Corollary~\ref{Cor: To a fixed point} shows that --- outside of a finite set of places of $K$ --- the sequence $(x_n)$ must either converge to a simple root of $f$ or else diverge in $\PP^1(K_v)$. In the statement of the theorem, we may take $S$ to be the union of the Archimedean places of $K$, the set $S_\infty$ (see Proposition~\ref{Prop: Not to infinity}), and the sets $S_\alpha$ for all multiple roots $\alpha$. This concludes the proof of Part~\eqref{Thm: All Places} of the theorem. 

	Now assume $\alpha$ is a simple root of $f$. Since $m = 1$, equation \eqref{Eq: Newton recentered} yields
	\benn
		x_{n+1} - \alpha
		= (x_n - \alpha )^2\left(\frac{g'(x_n)}
			{g(x_n) + (x_n-\alpha)g'(x_n)}\right).
	\eenn
If $\pp \not\in S_\alpha$ is a prime ideal that divides $x_n - \alpha$ for some $n$, then $\pp$ cannot divide the denominator of the above expression. Hence $\pp^2 \mid (x_{n+1} - \alpha)$. By induction, $\pp^{2^\ell} \mid( x_{n+\ell} - \alpha)$ for all $\ell \geq 0$, which shows $(x_n)$ converges to $\alpha$ in the $\pp$-adic topology.

	Now we must determine under what conditions there exist infinitely many primes $\pp$ as in the last paragraph. By Theorem~\ref{Thm: Diophantine Dynamics} we see that for each sufficiently large $n$, the numerator of $x_n - \alpha$ admits a primitive prime factor $\pp$ if and only if the Newton map $N$ is not totally ramified at $\alpha$. Provided $\pp \not\in S_\alpha$, the previous paragraph shows that $(x_n)$ converges to $\alpha$ in $\PP^1(K_\pp)$. 
Theorem~\ref{Thm: Main}\eqref{Thm: Generic} is complete upon applying the criterion given by Proposition~\ref{Prop: Totally Ramified}.

Conversely, we want to show that there are infinitely many places for which $(x_n)$ does not converge to any root of $f$. Choose $\gamma$ an unramified periodic point of $N$ with period $q>1$.  Suppose $\pp$ is a prime factor of $x_n - \gamma$ for some $n$, and suppose further that $N$ has good reduction at $\pp$ and that $\pp$ does not divide the numerator or denominator of $\gamma - \alpha$. Then
\[
x_{n+q} = \underbrace{N \circ \cdots \circ N}_{q \text{ times}}(x_{n})
	\equiv \underbrace{N \circ \cdots \circ N}_{q \text{ times}}(\gamma) = \gamma \pmod {\pp}.
\]
By induction, we find that $x_{n+kq} \equiv \gamma \pmod {\pp}$ for each $k \geq 0$. In particular, this shows that $x_{n+kq} \not\equiv \alpha \pmod {\pp}$ for any $k \geq 0$, and hence $(x_n)$ does not converge to $\alpha$ in the $\pp$-adic topology. By Theorem~\ref{Thm: Diophantine Dynamics}, we see that $x_n - \gamma$ has a primitive prime factor for each sufficiently large $n$, and so the above argument succeeds for infinitely many prime ideals $\pp$, which completes the proof of the theorem.
\end{proof}

\begin{proof}[Proof of Corollary~\ref{Cor: Main Cor}]
	If $f$ has only one root, then it must be a multiple root. Hence there are only finitely many places $v$ of $K$ such that $(x_n)$ converges $v$-adically by part~\eqref{Thm: All Places} of the theorem.
	
	Suppose now that $f$ has exactly two distinct roots. If neither of them is simple, then we conclude just as in the last paragraph. If at least one of the roots is simple, say $\alpha$, then by definition we have $E_\alpha(t) = d-1$. Part~\eqref{Thm: Generic} of the theorem shows that $(x_n)$ converges to $\alpha$ for only finitely many places of $K$.
	
	Next suppose that $f$ has three distinct roots $\alpha, \beta, \gamma$ of multiplicities $1, b, c$, respectively. Then $1 + b + c = d = \deg(f)$, so that
	\[
		E_\alpha(t) = b(t-\gamma) + c(t- \beta) = (d-1)t - (b \gamma + c \beta).
	\]
The criterion given in part~\eqref{Thm: Generic} of the theorem for $\alpha$ to be exceptional becomes
	\[
		E_\alpha(t) = (d-1)(t - \alpha).
	\]
Comparing coefficients in these last two expressions for $E_\alpha$ gives the second conclusion of the corollary.

	Now we assume that $f$ has degree $d \geq 3$ and no repeated root. Suppose $\alpha$ is an exceptional root of $f$. Then the theorem gives
	\[
		E_\alpha(t) = (d-1)(t-\alpha)^{d-2}.
	\]
Write $f(t) = A(t-\alpha)g(t)$ for some $A \in K^\times$ and monic polynomial $g \in \alg{K}[t]$ with $g(\alpha) \neq 0$. As $f$ has no repeated root, writing $g(t) = \prod_{i > 1} (t - \alpha_i)$ and differentiating shows
	\[
		E_\alpha(t) = g'(t).
	\]
Hence $g(t) = (t - \alpha)^{d-1} +B$ for some $B \in \alg{K}$, and then
	\benn
		f(t) = A(t-\alpha)^d + AB(t-\alpha)
	\eenn
Note $B \neq 0$, else $f$ has a repeated root. Upon replacing $B$ with $B/A$, we have derived the desired form of $f$ given in conclusion~\eqref{Cor: Exceptional} of the corollary. The coefficient of the $t^{d-1}$ term of $f$ is $-Ad\alpha$. (Note that $d-1 > 1$ by hypothesis.) Since $f$ has coefficients in~$K$, we conclude that $\alpha$ is also in~$K$. Moreover, it follows that $\alpha$ is uniquely determined by the coefficient of the $t^{d-1}$ term of $f$, and hence $f$ can have at most one exceptional root. The coefficient of the linear term is $(-1)^{d-1}Ad\alpha^{d-1} + B$, which shows $B \in K$. 

	To complete the proof of conclusion~\eqref{Cor: Exceptional}, we must show that if $f(t) = A(t-\alpha)^d + B(t-\alpha)$, then $\alpha$ is an exceptional root. But the argument in the previous paragraph can be run in reverse to see that $E_\alpha(t) = (d-1)(t-\alpha)^{d-2}$, and so we are finished by the second part of the main theorem.

	The final conclusion of the corollary follows immediately from the third because an irreducible polynomial in $K[t]$ has no $K$-rational root.
\end{proof}

\begin{proof}[Proof of Corollary~\ref{Cor: Exceptional Polynomials}]
	If $f$ is quadratic with two simple roots, then it has the form $f(t) = A(t-\alpha)(t-\beta)$ for some $A \in K$ and $\alpha, \beta \in \alg{K}$. We leave it to the reader to check that $f(t)$ is dynamically equivalent to $t^2 - t$. On the other hand, we saw in Corollary~\ref{Cor: Main Cor} that every quadratic polynomial has an exceptional root.
	
	Now suppose $d = \deg(f) > 2$. Again by Corollary~\ref{Cor: Main Cor}, we know that $f$ has an exceptional root $\alpha$ if and only if $f(t) = A(t-\alpha)^d + B(t - \alpha)$ for some nonzero $A, B \in K$. If we let $\zeta \in \alg{K}$ be such that $\zeta^{d-1} = - B / A$, then
		$-(\zeta B)^{-1} f(\zeta t + \alpha) = t^d - t$.
\end{proof}


\section{The Density of Places of Convergence}
\label{Sec: Density}

In this section we collect a few pieces of evidence for Conjecture~\ref{Conjecture: Newton conjecture}.

\subsection{A Heuristic Argument}
	Suppose that $f \in \QQ[t]$ is a polynomial of degree $d \geq 3$, and for the sake of this discussion we may assume that none of its roots are exceptional. Let $x_0 \in \QQ$ and let $(x_n)$ be the associated Newton approximation sequence. We showed in the proof of the main theorem that for $(x_n)$ to converge to a root of $f$ in $\QQ_p$, it is necessary and sufficient that $x_n \equiv \alpha \pmod p$ for some root $\alpha$ of $f$ --- at least once one discards finitely many primes $p$. This means, in particular, that the orbit $(x_n \pmod p)$ eventually encounters a fixed point of the reduction $\widetilde{N}: \PP^1(\FF_p) \to \PP^1(\FF_p)$.
	
	In fact, for any prime $p$ outside of a certain finite set, the orbit $(x_n \pmod p)$ is well defined and eventually becomes periodic with some period $\ell(p)$. The key observation is that $\widetilde{N}$ has roughly $d^q$ periodic points with period in the interval $[2, q]$, while it has far fewer fixed points: approximately $d$ of them. If we expect that $(x_n \pmod p)$ attains any of the values in $\PP^1(\FF_p)$ with equal probability, then  we should expect the density of the set of primes for which $\ell(p) = 1$ to be zero. Combining this heuristic with the last paragraph shows the set of primes for which $(x_n)$ converges to a root of $f$ must have density zero.

\subsection{Two Numerical Examples}

	In this section we consider two examples of cubic polynomials. The first example, $f(t) = t^3 - 1$, has no exceptional roots. The second, $g(t) = t^3 - t$, has an exceptional root. The evidence for our density conjecture is somewhat ambiguous for both of these examples, but it exhibits several other features that are of independent interest.

	We consider first the cyclotomic polynomial $f(t) = t^3 -1$ over the rational field. Its Newton map is given by
	\[
		N_f(t) = \frac{2t^3 + 1}{3t^2}.
	\]
By Corollary~\ref{Cor: Main Cor}\eqref{Cor: Cubic} we know that $f$ has no exceptional root.

	Tracing through the proofs of Proposition~\ref{Prop: Not to infinity} and of the main theorem, we see that aside from the primes $p = 2, 3$, the sequence $(x_n)$ diverges in $\PP^1(\QQ_p)$ if and only if $x_n \equiv \infty \pmod p$ for some $n$, and it converges to a root of $f$ if and only if $f(x_n) \equiv 0 \pmod p$ for some $n$. For any particular $x_0$, one can treat the primes $p = 2, 3$ by hand. We used \textit{Sage~4.3.3} to compute the quantity
	\be
	\label{Eq: delta definition}
		\delta(x_0, X) = \frac{\#\{p \leq X : (x_n) \text{ converges to a root of $f$ in $\QQ_p$ }\}}{\pi(X)}
	\ee
for $x_0 = 2, 3, 4, 5$ and $X$ up to $200,000$ in increments of $20,000$. One knows that $(x_n)$ is not eventually periodic in any of these cases because, for example, Newton's method applied over the reals converges to~1.
The data is summarized in Table~\ref{Table: t^3 - 1}. The values of $\delta(x_0, X)$ are clearly decreasing with $X$, although it is not immediately obvious that they are tending to zero as predicted by our density conjecture.
	
\begin{table}[th]

	\begin{tabular}{|c|c|c|c|c|}
		\hline
			$X \backslash x_0$	 & 2 & 3 & 4 & 5 \\
		\hline
		\hline
		$20K$ &
2.431 &
2.476 &
2.962 &
2.962 
\\
		\hline
		$40K$ &
1.951 &
1.975 &
2.284 &
2.308 
 \\
		\hline
		$60K$ &
1.568 &
1.634 &
1.800 &
1.816 
\\
		\hline
		$80K$  &
1.276 &
1.365 &
1.544 &
1.544 
\\
		\hline
		$100K$  &
1.178 &
1.209 &
1.376 &
1.345 
\\
		\hline
		$120K$ &
1.088 &
1.115 &
1.292 &
1.239 
\\
		\hline
		$140K$ &
0.9915 &
1.022 &
1.184 &
1.145 
 \\
		\hline
		$160K$ &
0.9058&
0.9467&
1.062&
1.069
\\
		\hline
		$180K$  &
0.8628&
0.9301&
0.9852&
1.016
\\
		\hline
		$200K$  &
0.8396&
0.9064&
0.9119&
0.9564
\\

		\hline
	\end{tabular}

	\vspace{0.5cm}
	
	\caption{Some convergence data for the polynomial $f(t) = t^3 - 1$. 
		This table shows the value of $100\cdot\delta(x_0, X)$ as given by~\eqref{Eq: delta definition}.
		The results are rounded off to four decimal places. We write $20K$ for $20,000$, etc.}
	\label{Table: t^3 - 1}
\end{table}

	For the second example, consider the polynomial $g(t) = t^3 - t$. Corollary~\ref{Cor: Main Cor}\eqref{Cor: Exceptional} shows that $\alpha = 0$ is an exceptional root of $g$, but that $\pm 1$ are non-exceptional. As in the previous example, we may work modulo~$p$ for primes $p > 3$ to determine whether or not the sequence $(x_n)$ converges or not, and the remaining cases we may check by hand.
	
	In contrast to the last example, we would like to determine if one of the roots $\pm 1$ is a limit of the sequence $(x_n)$ more often than the other. To that end, define
	\be
	\label{Eq: delta pm definition}
		\ba
		\delta_+(x_0, X) &= \frac{\#\{p \leq X : \ x_n \to +1 \text{ in $\QQ_p$ }\}}{\pi(X)} \\
		\delta_-(x_0, X) &= \frac{\#\{p \leq X : \ x_n \to -1 \text{ in $\QQ_p$ }\}}{\pi(X)}.
		\ea
	\ee
Our findings are summarized in Table~\ref{Table: t^3 - t}.
The data appears to indicate that the primes for which $(x_n)$ converges are split roughly in half between those that converge to $+1$ and those that converge to $-1$. Most of the data suggests a bias toward the root $+1$ (most strongly for $x_0 = 5$), although we have no explanation at present for this behavior.

\begin{table}[th]
	
	\begin{tabular}{|c|c|c|c|c|}
		\hline
			$X \backslash x_0 $	& 2 & 3 & 4 & 5 \\
		\hline
		\hline
			$20K$ &
			1.547 /
			1.503 &
			1.547 /
			1.194 &
			1.503 /
			1.415 &
			1.592 /
			1.194 
		\\

		\hline
			$40K$ &
			1.047 /
			0.9993 &
			0.9755 /
			0.9041 &
			0.9993 /
			0.9517 &
			1.142 /
			0.8327
		\\
		
		\hline
			$60K$ &
			0.8915 /
			0.7925 &
			0.8420 /
			0.7760 &
			0.8255 /
			0.7760 &
			0.9080 /
			0.7099 
		\\

		\hline
			$80K$ &
			0.7656 /
			0.6508 &
			0.7273 /
			0.6763 &
 			0.7146 /
			0.7146 &
			0.7784 /
			0.6252
		\\
		\hline
			$100K$ &
			0.6568 /
			0.6151
			&
			0.6255/
			0.6359
			&
			0.6568/
			0.6151
			&
			0.6672 /
			0.5317
		\\
		\hline

	\end{tabular}

	\vspace{0.5cm}
	
	\caption{Some convergence data for the polynomial $g(t) = t^3 - t$. This table shows the value of
		$100 \cdot \delta_{\pm}(x_0, X)$
		as given by~\eqref{Eq: delta pm definition}. It is represented in the form 
		$100\cdot \delta_+ \  / \ 100\cdot\delta_-$, and
		the results are rounded off to four decimal places. We write $20K$ for $20,000$, etc.}
	\label{Table: t^3 - t}
\end{table}

	One could also stage a ``dynamical prime number race'' in this context. That is, we could ask for what proportion of $X$ do we have $\delta_-(x_0, X) < \delta_+(x_0, X)$. For $x_0 = 2, 4, 5$, the data in Table~\ref{Table: t^3 - t} shows that $\delta_+(x_0, \cdot)$ is running faster than $\delta_-(x_0, \cdot)$ at the five $X$-values at which we observed them. For $x_0 = 3$,  we see that $\delta_-(x_0, \cdot)$ overtakes $\delta_+(x_0, \cdot)$ at least once in the interval $(80K, 100K]$. In any case, we intend to explore these phenomena further.

\section{Remarks on the Function Field Case}

	Although the results in \cite{Silverman_Voloch_2009} work for global fields of positive characteristic, our results do not. We present three highlights of these failures over the function field $\FF_p(X)$. First of all, Proposition~\ref{Prop: Degree} may give a Newton map of degree much smaller than expected. For example, the polynomials  $f(t) = t^{p+1} - 1$ and  $g(t) = t^p(t-1)$ have Newton maps $N_f(t) = 1 / t^p$ and $N_g(t) = 1$, respectively.
	
	Theorem~\ref{Thm: Main}\eqref{Thm: Generic} may also fail in this context. For the polynomial $f(t) = t^{p+1} - 1$, observe that $N_f \circ N_f(t) = t^{p^2}$. Thus
	\[
		f(x_{2n}) = x_{2n}^{p+1} - 1 = x_0^{(p+1)p^{2n}} -1 = (x_0^{p+1} - 1)^{p^{2n}} = f(x_0)^{p^{2n}}.
	\]
Hence $f(x_n)$ can only be $v$-adically small if $f(x_0)$ was small to begin with, which is to say that there are at most finitely many places of $\FF_p(X)$ for which $(x_n)$ converges. On the other hand, suppose $\alpha$ is a root of $f$. As $f$ has no repeated root, we see that 
	\[
		E_\alpha(t)  = \frac{d}{dt} \left(\frac{t^{p+1}-1}{t-\alpha} \right)=
		\frac{1 - \alpha t^p}{(t-\alpha)^2} = - \alpha(t-\alpha)^{p-2} \neq 0,
	\]
contrary to what one might predict from the theorem.

	Finally, Corollary~\ref{Cor: Main Cor}\eqref{Cor: Exceptional} fails for $h(t) = t^p - t$: all of its roots are exceptional. Indeed, one checks that $N_h(t) = t^p$, and so for any root $\alpha$ of $h$ and any $x_0 \in \FF_p(X)$, we have
	\[
		x_n - \alpha = x_0^{p^n} - \alpha = (x_0 - \alpha)^{p^n}.
	\]	
It follows that the only places $v$ of $\FF_p(X)$ for which $x_n$ can be close to $\alpha$ are those for which $x_0$ is already close to $\alpha$; in particular, there are only finitely many such places if $x_0$ is not a root of $h$.

	The examples given here are all defined over the constant field $\FF_p$. Proposition~\ref{Prop: Dyn Equivalent} suggests the following definition: a polynomial $f$ with coefficients in $\FF_p(X)$ is \textbf{isotrivial} if there exist constants $A, B, C \in \alg{\FF_p(X)}$ with $AB \neq 0$ for which $A f(B t  +C)$ is defined over $\alg{\FF_p}$. The proposition implies that $f$ is isotrivial if and only if $N_f$ is isotrivial as a dynamical system. It would be interesting to see which of our results carry over for non-isotrivial polynomials.




\bibliographystyle{plain}
\bibliography{xander_bib}

\providecommand\biburl[1]{\texttt{#1}}
\begin{thebibliography}{1}

\bibitem{Faber_Granville_Crelle_2010}
Xander Faber and Andrew Granville.
\newblock Prime factors of dynamical sequences.
\newblock To appear in \textit{J. Reine Angew. Math.} \verb+arXiv:0903.1344v1+.

\bibitem{Ingram_Silverman_2009}
Patrick Ingram and Joseph~H. Silverman.
\newblock Primitive divisors in arithmetic dynamics.
\newblock {\em Math. Proc. Cambridge Philos. Soc.}, 146(2):289--302, 2009.

\bibitem{Robert_p-adic_Book_2000}
Alain~M. Robert.
\newblock {\em A course in {$p$}-adic analysis}, volume 198 of {\em Graduate
  Texts in Mathematics}.
\newblock Springer-Verlag, New York, 2000.

\bibitem{Silverman_Voloch_2009}
Joseph~H. Silverman and Jos{\'e}~Felipe Voloch.
\newblock A local-global criterion for dynamics on {$\Bbb P^1$}.
\newblock {\em Acta Arith.}, 137(3):285--294, 2009.

\end{thebibliography}

\end{document}